\newtheorem{theorem}{Theorem} 
\newtheorem{proposition}[theorem]{Proposition}
\theoremstyle{definition}
\theoremstyle{remark}
\title{The number of Prime Parking Functions}
\author[R.~Duarte]{Rui Duarte}
\address{CIDMA and Department of Mathematics, University of Aveiro, 3810-193 Aveiro, Portugal}
\email{rduarte@ua.pt}
\author[A.~Guedes~de~Oliveira]{Ant\'onio Guedes de Oliveira}
\address{CMUP and Department of Mathematics, Faculty of Sciences, University of Porto, 4169-007 Porto, Portugal}
\email{agoliv@fc.up.pt}
\thanks{The authors were partially supported by CIDMA and CMUP, respectively, which are financed by national funds through Funda\c{c}\~ao para a Ci\^encia e a Tecnologia (FCT) within projects UIDB/04106/2020 (CIDMA) and UIDB/00144/2020 (CMUP)}
\newcommand\R{\mathbb{R}}
\newcommand\Z{\mathbb{Z}}
\newcommand\N{\mathbb{N}}
\newcommand\ba{\mathbf{a}}
\newcommand\bb{\mathbf{b}}
\newcommand\bc{\mathbf{c}}
\newcommand\red{\color{red}}
\newcommand\pf[1]{\text{\small PF}_{#1}}
\newcommand\ppf[1]{\text{\small PF}'_{#1}}
\newcommand{\carroe}[1]{%
\tikz[baseline=(tocarroe.base)]{\useasboundingbox (-.1,-.175) rectangle (10pt,2pt);%
\node[inner sep=.5pt,outer sep=.5pt] (tocarroe) {};
 \draw[thick,double distance = 8pt,line cap=round] ([yshift=-1pt] tocarroe.north west)
 -- ([yshift=-1pt,xshift=8pt] tocarroe.north west) ;
 \draw[thick,double distance = 8pt,line cap=round] 
   ([yshift=-1pt] tocarroe.north west) -- ([yshift=-1pt,xshift=4pt] tocarroe.north west) ;
 \node (tocarroe) {\footnotesize$#1$};
   }}
\newcommand{\carrop}[1]{%
\tikz[baseline=(tocarrop.base)]{\useasboundingbox (-.1,-.175) rectangle (10pt,2pt);%
\node[inner sep=.5pt,outer sep=.5pt] (tocarrop) {};
 \draw[red,thick,double distance = 8pt,line cap=round] ([yshift=-1pt] tocarrop.north west)
 -- ([yshift=-1pt,xshift=8pt] tocarrop.north west) ;
 \draw[red,thick,double distance = 8pt,line cap=round] 
   ([yshift=-1pt] tocarrop.north west) -- ([yshift=-1pt,xshift=4pt] tocarrop.north west) ;
 \node (tocarrop) {\red\footnotesize$#1$};
\draw[red,thick,<-] ([xshift=15pt] tocarrop.south east) arc (0:90:7.5pt);
  }}
\newcommand{\carropd}[1]{%
\tikz[baseline=(tocarropd.base)]{\useasboundingbox (-.1,-.175) rectangle (10pt,2pt);%
\node[inner sep=.5pt,outer sep=.5pt] (tocarropd) {};
 \draw[red,thick,double distance = 8pt,line cap=round] ([yshift=-1pt] tocarropd.north west)
 -- ([yshift=-1pt,xshift=8pt] tocarropd.north west) ;
 \draw[red,thick,double distance = 8pt,line cap=round] 
   ([yshift=-1pt] tocarropd.north west) -- ([yshift=-1pt,xshift=4pt] tocarropd.north west) ;
 \node (tocarropd) {\red\footnotesize$#1$};
\draw[red,thick] ([xshift=5pt,yshift=7.5pt] tocarropd.south east) -- ([xshift=10pt,yshift=7.5pt] tocarropd.south east) ;
\draw[red,thick,<-] ([xshift=30pt,yshift=-12.5pt] tocarropd.south east) arc (0:90:20pt);
}}
\newcommand{\carropq}[4]{%
\tikz[baseline=(tocarropq.base)]{\useasboundingbox (-.1,-.175) rectangle (10pt,2pt);%
\node[inner sep=.5pt,outer sep=.5pt] (tocarropq) {};
 \draw[red,thick,double distance = 8pt,line cap=round] ([yshift=-1pt] tocarropq.north west)
 -- ([yshift=-1pt,xshift=8pt] tocarropq.north west) ;
 \draw[red,thick,double distance = 8pt,line cap=round] 
   ([yshift=-1pt] tocarropq.north west) -- ([yshift=-1pt,xshift=4pt] tocarropq.north west) ;
 \node (tocarropq) {\red\footnotesize$#4$};
\draw[red,thick] ([xshift=#1pt,yshift=7.5pt] tocarropq.south east) -- ([xshift=#2pt,yshift=7.5pt] tocarropq.south east) ;
\draw[red,thick,<-] ([xshift=#3pt] tocarropq.south east) arc (0:90:7.5pt);
}}
\newcommand{\carropr}[4]{%
\tikz[baseline=(tocarropr.base)]{\useasboundingbox (-.1,.175) rectangle (10pt,2pt);%
\node[inner sep=.5pt,outer sep=.5pt] (tocarropr) {};
 \draw[red,thick,double distance = 8pt,line cap=round] ([yshift=0pt] tocarropr.north west)
 -- ([yshift=0pt,xshift=8pt] tocarropr.north west) ;
 \draw[red,thick,double distance = 8pt,line cap=round] 
   ([yshift=0pt] tocarropr.north west) -- ([yshift=0pt,xshift=4pt] tocarropr.north west) ;
 \node (tocarropr) {\raisebox{2pt}{\red\footnotesize$#4$}};
\draw[red,thick] ([xshift=#1pt,yshift=10pt] tocarropr.south east) -- ([xshift=#2pt,yshift=10pt] tocarropr.south east) ;
\draw[red,thick,<-] ([xshift=#3pt] tocarropr.south east) arc (0:90:10pt);
}}
\begin{document}
\maketitle
\begin{abstract}
A parking function of length $n$ is prime if we obtain a parking function of length $n-1$ by deleting one 1 from it. In this note we give a new direct proof that the number of prime parking functions of length $n$ is $(n-1)^{n-1}$. This proof leads to a new interpretation, in close terms to the definition of parking function.
\end{abstract}

Let us suppose that $n$ cars enter some one-way street with $n$ parking places marked $1,2,\dotsc,n$, and each driver intends to park in a particular spot, say, driver $i$ intends to park in the spot marked $p(i)\in[n]$. 
Suppose that, when driver $i$ finds his favorite spot free, he parks there, but when he finds it occupied by any of the previous $i-1$ drivers he looks for the next free space after spot $p(i)$ for parking, and if none exists, then leaves the street.
For which functions $p\in[n]^n$ is it possible that all drivers park and do not leave the street? Let us call the suitable functions \emph{parking functions of length $n$}~\footnote{Parking functions have attracted much attention since its definition by Konheim and Weiss. For a recent survey, see~\cite{Yan}.}.

The answer to this question was given by Konheim and Weiss~\cite{KW} as follows: suppose that 
we take the nondecreasing rearrangement
~\footnote{That is, $q_i=p_{\pi_i}$ for some permutation
$\pi\in\mathfrak{S}_n$ such that $q_1\leq q_2\leq\dotsb\leq q_n$.}
$\mathbf{q}=(q_1,\dotsc,q_n):=\big(p(1),\dotsc,p(n)\big)^\uparrow$.
Then, the \emph{parking functions} are those $p\in[n]^n$ for which
\begin{align}
&q_i\leq i,\text{ for every $i\in[n]$\,.}\label{eq.pf1}
\shortintertext{In this article we are mainly interested in the subclass of \emph{prime parking functions}, which are those $p\in[n]^n$ such that~\footnotemark}
&q_i< i,\text{ for every $i\in[n]$ with $i>1$}\,.\label{eq.ppf1}
\end{align}
\footnotetext{By definition, we may (and will) consider a length $n$ prime parking function as a function $p\in[n-1]^n$.}

For an example of a parking function, suppose that $n=15$ and take
\[\ba\text{\footnotesize$:=(3, 13, 6, 3, 7, 3, 2, 1, 10, 11, 6, 7, 14, 10, 11)\in[15]^{15}$}\]
Then all cars park, in the previously described sense. In fact, the parking goes as follows.
\begin{center}
\setlength{\tabcolsep}{4.25pt}
\begin{tabular}[h]{p{.4cm} p{.4cm} p{.4cm} p{.4cm} p{.4cm} p{.4cm} p{.4cm} p{.4cm} p{.4cm} p{.4cm} p{.4cm}
p{.4cm} p{.4cm} p{.4cm} p{.4cm}}
&&\carropd{6}\\[-1.5pt]
\raisebox{-5pt}{$\longrightarrow$}&&\carrop{4}&&&\carropq{2}{22.5}{30}{11}&\carropr{2}{20}{30}{12}&&&\carropq{2}{22.5}{30}{14}&
\carropr{2}{60}{70}{15}\\
\carroe{8}&\carroe{7}&\carroe{1}&\carroe{4}&\carroe{6}&\carroe{3}&\carroe{5}
&\carroe{11}&\carroe{12}&\carroe{9}&\carroe{10}&\carroe{14}&\carroe{2}&\carroe{13}&\carroe{15}\\[-2.5pt]
\text{\scriptsize1}&\text{\scriptsize2}&\text{\scriptsize3}&\text{\scriptsize4}&\text{\scriptsize5}
&\text{\scriptsize6}&\text{\scriptsize7}&\text{\scriptsize8}&\text{\scriptsize9}&\text{\scriptsize10}
&\text{\scriptsize11}&\text{\scriptsize12}&\text{\scriptsize13}&\text{\scriptsize14}&\text{\scriptsize15}
\end{tabular}
\end{center}
On the other hand,
{\footnotesize$\ba^\uparrow=(1, 2, 3, 3, 3, 6, 6, 7, 7, 10, 10, 11, 11, 13, 14)$}, which
is componentwise less or equal to $(1,2,\dotsc,15)$.

\noindent
\begin{minipage}[h]{.4\textwidth}
\setlength{\parindent}{1em}
\indent
Parking functions occur when we consider in $\R^n$ the set of hyperplanes defined by 
the equations $x_i =x_j + k$, where $1\leq i<j\leq n$ and $k=0,1$, all of which contain the line $\ell$
of equation $\ x_1=x_2=\dotsb=x_n\ $.
Hence, we may represent each hyperplane as the hyperline given by its intersection with a given hyperplane orthogonal to
$\ell$.
\end{minipage}\hfill
\begin{minipage}[t]{.5\textwidth}
\begin{tikzpicture}[scale=2,baseline=-1.75]
\draw[white,fill=red!5] (0.25, -0.433) -- (-0.25, 0.433) -- (0.75, 0.433) -- cycle;
\node[right] at (1.5,0.433) (l1) {\red\tiny$x=y+1$};
\node[right] at (1.5,0) (l1) {\tiny$x=y$};
\node[rotate=60,right] at (1.05,.95) (l1) {\tiny$x=z$};
\node[rotate=-60,left] at (-0.05,.95) (l1) {\tiny$y=z$};
\node[rotate=60,right] at (0.55,.95) (l1) {\red\tiny$x=z+1$};
\node[rotate=-60,left] at (-0.55,.95) (l1) {\red\tiny$y=z+1$};
\draw[thin,red] (0.5,-0.866) -- (-0.55,.95) ;
\draw[thin,red] (0.55,.95) -- (-0.5,-0.866) ;
\draw[very thick] (1.00,-0.866) -- (-0.05,.95) ;
\draw[very thick] (1.05,.95) -- (0,-0.866) ;
\draw[thin,red] (1.5,0.433) -- (-0.7,0.433) ;
\draw[very thick] (1.5,0) -- (-0.7,0) ;
\node at (0.996,0.217) (a) {\scriptsize$221$} ;
\node at (1.04,0.577) (a) {\scriptsize$231$} ;
\node at (0.625,0.7) (a) {\scriptsize$131$} ;
\node at (0.250,0.8) (a) {\scriptsize$132$} ;
\node at (-0.125,0.7) (a) {\scriptsize$122$} ;
\node at (-0.539,0.577) (a) {\scriptsize$123$} ;
\node at (-0.496,0.217) (a) {\scriptsize$113$} ;
\node at (0,0.289) (a) {\red\scriptsize$112$} ;
\node at (0.250,0.144) (a) {\red\scriptsize$111$} ;
\node at (0.500,0.289) (a) {\red\scriptsize$121$} ;
\node at (-0.455,-0.289) (a) {\scriptsize$213$} ;
\node at (-0.0625,-0.541) (a) {\scriptsize$212$} ;
\node at (0.250,-0.722) (a) {\scriptsize$312$} ;
\node at (0.250,-0.144) (a) {\red\scriptsize$211$} ;
\node at (0.562,-0.541) (a) {\scriptsize$311$} ;
\node at (0.955,-0.289) (a) {\scriptsize$321$} ;
\end{tikzpicture}
\end{minipage}
\smallskip

Label  the region $R_0$ defined by $\ x_1>x_2>\dotsb>x_n>x_1+1\ $ with $\ (1,1,\dotsc,1)\in\Z^n$.
Now, given regions $R$ and $R'$ separated by a unique hyperplane $H$ so that $R$ and $R_0$ are on the same side of $H$, give to $R'$ the label of $R$ but add $1$ to one coordinate,
the $i$.th coordinate if $H$ is defined by an equation of form $\ x_i=x_j\ $ ($i<j$), and 
the $j$.th coordinate if $H$ is defined by an equation of form $\ x_i=x_j+1\ $.

Pak and Stanley showed~\cite{Stand}  that the labels thus defined are exactly the parking functions of length $n$.
We proved~\cite{DGO}  that the {prime parking functions are the Pak-Stanley labels of the bounded regions} (shaded on the figure above, where the case $n=3$ is represented).

\section{Main theorem}

Let $\pf{n}$ and $\ppf{n}\subseteq\pf{n}$ be the sets of parking functions and of prime parking functions of length $n$, respectively.
We know that
\begin{align}
|\pf{n}|&=(n+1)^{n-1}\label{numpf}
\shortintertext{and}
|\ppf{n}|&=(n-1)^{n-1}\label{numppf} .
\end{align}
The goal of this paper is to show Equation~\ref{numppf} in a simple and original manner, as
a direct consequence of Theorem~\ref{mthm}, below.

Prime parking functions were introduced by Gessel, who proved Equation~\ref{numppf} through generating functions (Cf. \cite[Exercise 5.49 f]{Stan}). Soon thereafter, Kalikow~\cite[p.37]{Kal} gave a direct proof, which inspired ours.
\begin{theorem}\label{mthm}
For every $\ba=(a_1,\dotsc,a_n)\in[n-1]^n$ there is a unique $k\in[n-1]$ and a unique prime parking function $\bb=(b_1,\dotsc,b_n)$ such that
\[a_i\equiv b_i+k-1\pmod{n-1}\quad\text{ for every $i\in[n]$}\,.\]
\end{theorem}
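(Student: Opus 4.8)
The plan is to deduce Theorem~\ref{mthm} from the Cycle Lemma of Dvoretzky and Motzkin (in Raney's form), after recasting the prime-parking condition as a positivity condition on partial sums.

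Since both the defining condition~\eqref{eq.ppf1} and the congruence relating $\ba$ and $\bb$ depend only on the number of occurrences of each value, I would work with multiplicities. For $\bb\in[n-1]^n$ and $v\in[n-1]$ put $N_v:=\#\{i:b_i=v\}$ and $d_v:=N_v-1$, so that $\sum_{v=1}^{n-1}d_v=n-(n-1)=1$. The first step is to prove the equivalence: \emph{$\bb$ is a prime parking function if and only if $\sum_{v=1}^{m}d_v\ge 1$ for every $m\in[n-1]$.} Indeed, if $\bb^\uparrow=(q_1,\dots,q_n)$, then~\eqref{eq.ppf1} says $q_i\le i-1$ for $i\ge 2$, which is exactly the statement that $\#\{i:b_i\le m\}\ge m+1$ for $1\le m\le n-2$ (the case $m=n-1$ holding trivially); since $\#\{i:b_i\le m\}=m+\sum_{v=1}^{m}d_v$, this is the asserted positivity.

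Next I would translate the shift into a cyclic shift. If $a_i\equiv b_i+t\pmod{n-1}$ with $t=k-1$, then $N_v(\ba)=N_{v-t}(\bb)$, so the vector $\big(d_1(\bb),\dots,d_{n-1}(\bb)\big)$ is a cyclic shift of $\big(d_1(\ba),\dots,d_{n-1}(\ba)\big)$ by $t$; as $k$ ranges over $[n-1]$, one obtains all $n-1$ cyclic shifts. Moreover the $d$-vector of $\ba$ is an integer vector of sum $1$, hence aperiodic (a proper period would force the sum to be a multiple of some integer $\ge 2$), so these $n-1$ shifts are pairwise distinct, matching the $n-1$ admissible values of $k$.

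The final step is the Cycle Lemma: among the $N$ cyclic shifts of an integer sequence of total sum $1$, exactly one has all its partial sums positive. Applied to the $d$-vector of $\ba$ (with $N=n-1$), it produces a unique $t$, hence a unique $k\in[n-1]$, for which the corresponding $\bb$ satisfies the positivity condition of the first step, that is, is a prime parking function; this is precisely the existence-and-uniqueness assertion of the theorem. I expect the only delicate point to be the clean equivalence of the first step---carefully handling the sorted sequence, the index range, and the automatic boundary case $m=n-1$---since once it is in place the Cycle Lemma delivers the conclusion immediately.
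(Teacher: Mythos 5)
Your proof is correct, and it takes a genuinely different route from the paper's. You pass to the multiplicity vector $(N_v)_{v\in[n-1]}$, observe that the prime-parking condition \eqref{eq.ppf1} is equivalent to positivity of all partial sums of $(N_v-1)_{v\in[n-1]}$ (your reduction of $q_i\le i-1$ to $\#\{i:b_i\le m\}\ge m+1$ is the standard equivalence and is handled correctly, including the vacuous case $m=n-1$), and then invoke the cycle lemma on a length-$(n-1)$ integer vector of sum $1$; this yields existence and uniqueness of $k$ in a single stroke, since for fixed $k$ the candidate $\bb$ is forced by the congruence. The paper instead works on the ``car'' side rather than the ``spot'' side: it sorts $\ba$, selects the index $d$ minimizing the explicitly defined quantities $s_i$, sets $k=a_d$, verifies by direct inequalities that the resulting $\bb$ is dominated componentwise by $(1,1,\dotsc,n-d+1,n-d+2,\dotsc,n-1)$, and then gives a separate contradiction argument for uniqueness. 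The two arguments share the same combinatorial core --- your unique admissible cyclic shift corresponds to the last minimum of the partial sums, which is what the minimization of $s_i$ is secretly locating --- but your version outsources both halves of the statement to a standard lemma, whereas the paper's is self-contained and stays in the language of parking functions. One point to make explicit if you write this up: you need the form of the cycle lemma valid for arbitrary integer entries summing to $1$ (Raney's statement), since your entries $N_v-1$ are bounded below by $-1$ but not above by $1$; the Dvoretzky--Motzkin count of ``exactly $k$ good shifts out of $N$'' assumes entries at most $1$ and does not apply verbatim, though the sum-one case you use is unconditionally true. Your aperiodicity remark is a harmless consistency check but is not actually needed, since the cycle lemma already distinguishes shifts by starting position.
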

\begin{proof} 
Without loss of generality, suppose that $\ba^\uparrow=\ba$, and define,
for every $i\in[n-1]$,
\begin{align*}
s_i&=\text{\footnotesize $(a_{i+1}-a_i)+\dotsb+(a_n-a_i)+(a_1-a_i+n-1)+\dotsb+(a_{i-1}-a_i+n-1)$}\\
&=\sum_{j=1}^n a_j-n\,a_i+(i-1)(n-1)\,,
\shortintertext{and}
s_n&=(a_1-a_n+n-1)+\dotsb+(a_{n-1}-a_n+n-1)\\
&=\sum_{j=1}^n a_j-n\,a_n+(n-1)^2\,,
\end{align*}
and let $d\in[n]$ be such that $s_d=\min\{s_i\mid i\in[n]\}$. Let $k=a_d$, and
\begin{align*}
&\bb=\text{\footnotesize $(1,a_{d+1}-k+1,\dotsc,a_n-k+1,a_1-k+n,\dotsc,a_{d-1}-k+n)$}\,.
\shortintertext{Since, for $i\neq d$,}
&s_i-s_d=(n-1)(i-d)-n(a_i-a_d)\geq0\,,\\
&a_i-k
\leq(i-d)-
\frac{i-d}{n} <
\begin{cases}i-d,&\text{if $i > d$;}\\
i-d+1,&\text{if $i< d$.}\end{cases}
\end{align*}
and $\bb$ is a prime parking function for it is componentwise less or equal to
\[(1,1,\dotsc,n-d+1\,,\,n-d+2,\dotsc,n-1)\,.\]

For proving the uniqueness, now, suppose, contrary to our assumptions, that both $\bb=(b_1,\dotsc,b_n)$ and $\bc=(c_1,\dotsc,c_n)$ are prime parking functions and verify the hypothesis of our theorem. Hence, there exist $\ell\in\N$ such that $c_i\equiv b_i+\ell\pmod{n-1}$ for every $i\in[n]$.
Without loss of generality, again, we suppose that $\bb^\uparrow=\bb$.
If $b_n+\ell<n$, then $c_1=1+\ell=1$, and $\ell=0$ and $\bb=\bc$. Otherwise,
let $b_{i-1}<n-\ell\leq b_i$.
Then
\[\bc^\uparrow=(\overbrace{\text{\footnotesize$b_i+\ell-n+1$}}^{1},\dotsc,
\overbrace{\text{\footnotesize$b_n+\ell-n+1$}}^{{}\leq n-i},
\overbrace{\text{\footnotesize$1+\ell$}}^{{}\leq n-i+1},\dotsc,
\overbrace{\text{\footnotesize$b_{i-1}+\ell$}}^{{}\leq n-1})\,.\]
Hence, $b_i=n-\ell<i$ and $\ell+1\leq n-i+1$.
This is a contradiction, and it concludes the proof.
\end{proof}
 \noindent
For example, as before, let
\begin{align*}
&\ba=\text{\small$(3, 13, 6, 3, 7, 3, 2, 1, 10, 11, 6, 7, 14, 10, 11)\in\pf{15}$}\,.
\shortintertext{Then, if $k=10$ and
$a_i\equiv b_i+k-1\pmod{n-1}$ for every $i\in[n]$,}
&\bb=\text{\small$(8, 4, 11, 8, 12, 8, 7, 6, 1, 2, 11, 12, 5, 1, 2)\in\ppf{15}$}\,.
\end{align*}

\section{The parking point of view}
Note that, by \eqref{eq.pf1} and \eqref{eq.ppf1}, a prime parking function of length $n$ is a parking function of length $n-1$ with an extra $1$. More precisely,
if $\ba=(a_1,\dotsc,a_n)\in[n-1]^n$, $a_i=1$ for some $i\in[n]$, and
$\bb=(a_1,\dotsc,a_{i-1},a_{i+1},\dotsc,a_n)\in[n-1]^{n-1}$, then $\ba\in\ppf{n}$ if and only if $\bb\in\pf{n-1}$.
Hence, $\ba\in[n-1]^n$ is a prime parking function if all cars can park in a one-way street with $n$ spots labeled $\ {\red1},{\red1},2,\dots,n-1$. 
For instance, 
\[\text{\small$\bb=(8, 4, 11, 8, 12, 8, 7, 6, 1, 2, 11, 12, 5, 1, 2)\in\ppf{15}$}\]
parks as follows.
\begin{center}
\setlength{\tabcolsep}{4.25pt}
\begin{tabular}[h]{p{.4cm} p{.4cm} p{.4cm} p{.4cm} p{.4cm} p{.4cm} p{.4cm} p{.4cm} p{.4cm} p{.4cm} p{.4cm}
p{.4cm} p{.4cm} p{.4cm} p{.4cm}}
$\longrightarrow$\\[-5pt]
\carroe{9}&\carroe{14}&\carroe{10}&\carroe{15}&\carroe{2}&\carroe{13}&\carroe{8}
&\carroe{7}&\carroe{1}&\carroe{4}&\carroe{6}&\carroe{3}&\carroe{5}&\carroe{11}&\carroe{12}\\[-2.5pt]
\text{\scriptsize\red1}&\text{\scriptsize\red1}&\text{\scriptsize2}&\text{\scriptsize3}&\text{\scriptsize4}&\text{\scriptsize5}
&\text{\scriptsize6}&\text{\scriptsize7}&\text{\scriptsize8}&\text{\scriptsize9}&\text{\scriptsize10}
&\text{\scriptsize11}&\text{\scriptsize12}&\text{\scriptsize13}&\text{\scriptsize14}
\end{tabular}
\end{center}

In fact,
if all elements of $\ba$ park as described, the element of $\ba$ that parks in the first spot labeled $1$ must be 
$a_i=1$ so that $a_j>1$ for every $j<i$. Then, let $\bb$ be as defined above. Since it is parked in the remaining places, $\bb$ is a parking function.
On the other hand, if, again, $a_i=1$ for some $i\in[n]$ such that $a_j>1$ for every $j\in[i-1]$, 
then the parking procedure parks $a_i$ in the first spot labeled $1$, and parks the remaining elements of $\ba$ in the next positions, since they are the elements of $\bb$.

An immediate consequence of Theorem~\ref{mthm} is the following addendum to Konheim and Weiss' result.
\begin{proposition}
Let $n$ cars enter a one-way \emph{street} with $n$ spots, and suppose driver $i$ wants to park in position
$p(i)\leq n-1$.
Then, there is a unique $k\in[n-1]$ such that all cars can park in the previous sense, provided the spots are labeled consecutively
\[ {\red k},{\red k},k+1,\dots,n-1,1,2,\dotsc k-1\,.\]
\end{proposition}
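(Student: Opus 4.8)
The plan is to reduce the statement to Theorem~\ref{mthm} by way of the characterization recalled at the start of this section: a sequence in $[n-1]^n$ is a prime parking function exactly when all its cars park in the one-way street whose $n$ spots are labelled $1,1,2,\dots,n-1$. Writing $\ba=(p(1),\dots,p(n))\in[n-1]^n$, I would first record, for each shift $k\in[n-1]$, the physical position to which a driver is sent under the cyclic labelling $k,k,k+1,\dots,n-1,1,\dots,k-1$. Reading left to right, the two spots labelled $k$ occupy physical positions $1$ and $2$; the spot labelled $v$ with $k<v\le n-1$ occupies position $v-k+2$; and the spot labelled $v$ with $1\le v<k$ occupies position $n+1-k+v$. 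Thus a driver with preference $v$ aims at position $1$ if $v=k$, at position $v-k+2$ if $v>k$, and at position $n+1-k+v$ if $v<k$.

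Next I would compare this with the standard prime-parking street labelled $1,1,2,\dots,n-1$, which sends a driver with value $1$ to position $1$ and a driver with value $w\ge2$ to position $w+1$. The key step is a short case analysis showing that whenever $w\in[n-1]$ is the reduction of $a_i-k+1$ modulo $n-1$ (equivalently $a_i\equiv w+k-1\pmod{n-1}$), the physical target of driver $i$ in the $k$-shifted street coincides with the target of a driver with preference $w$ in the standard street. Since the parking dynamics depend only on physical positions and on the purely physical rule of advancing to the next free spot, the two parking processes then become literally identical: all cars park in the $k$-shifted street if and only if the reduced sequence $\bb=(b_1,\dots,b_n)$, defined by $a_i\equiv b_i+k-1\pmod{n-1}$, is a prime parking function.

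With this equivalence established, the Proposition follows at once from Theorem~\ref{mthm}, which furnishes precisely one $k\in[n-1]$ for which the reduced sequence $\bb$ is a prime parking function, hence precisely one $k$ for which all cars park under the stated labelling. This yields existence and uniqueness simultaneously, with no separate uniqueness argument needed.

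I expect the main obstacle to be the position-matching case analysis, specifically keeping the modular reduction consistent with the range $[n-1]$. One must treat separately $a_i=k$, $a_i>k$, and $a_i<k$, and check that the reduction lands in the intended subrange in each case — $b_i=1$, $b_i=a_i-k+1$ (with $2\le b_i\le n-k$), or $b_i=a_i-k+n$ (with $n-k+1\le b_i\le n-1$) — so that the two target-position formulas genuinely agree. Once the targets are shown to coincide, the identification of the two parking processes is automatic and demands no further dynamical reasoning.
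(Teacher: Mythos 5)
Your proposal is correct and follows the same route the paper intends: the paper presents the Proposition as an immediate consequence of Theorem~\ref{mthm} via the Section~2 characterization of prime parking functions as exactly those sequences whose cars all park in the street labelled $1,1,2,\dots,n-1$, and your shift-by-$k$ position-matching argument is precisely the detail the paper leaves implicit. Your case analysis ($a_i=k$, $a_i>k$, $a_i<k$) and the resulting ranges for $b_i$ check out, so the reduction to Theorem~\ref{mthm} is sound.
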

For example, for  
{\footnotesize$\ba=(3, 13, 6, 3, 7, 3, 2, 1, 10, 11, 6, 7, 14, 10, 11)$}, we have
\begin{center}
\setlength{\tabcolsep}{4.25pt}
\begin{tabular}[h]{p{.4cm} p{.4cm} p{.4cm} p{.4cm} p{.4cm} p{.4cm} p{.4cm} p{.4cm} p{.4cm} p{.4cm} p{.4cm}
p{.4cm} p{.4cm} p{.4cm} p{.4cm}}
$\longrightarrow$\\[-5pt]
\carroe{9}&\carroe{14}&\carroe{10}&\carroe{15}&\carroe{2}&\carroe{13}&\carroe{8}
&\carroe{7}&\carroe{1}&\carroe{4}&\carroe{6}&\carroe{3}&\carroe{5}&\carroe{11}&\carroe{12}\\[-2.5pt]
\text{\scriptsize\red10}&\text{\scriptsize\red10}&\text{\scriptsize11}&\text{\scriptsize12}&\text{\scriptsize13}&\text{\scriptsize14}
&\text{\scriptsize1}&\text{\scriptsize2}&\text{\scriptsize3}&\text{\scriptsize4}
&\text{\scriptsize5}&\text{\scriptsize6}&\text{\scriptsize7}&\text{\scriptsize8}&\text{\scriptsize9}
\end{tabular}
\end{center}

\end{document}